\documentclass[reqno, 11pt]{amsart}
\usepackage{amsthm, amsmath, amssymb, graphicx, tikz,enumerate,paralist,bbm,setspace,bbm}
\usepackage[left=1in,right=1in,top=1in,bottom=1in]{geometry}

\usepackage{xr}
\externaldocument{relative-szemeredi130521}

\usepackage[color,final]{showkeys} 
\usepackage[colorlinks=false, pdfstartview=FitV, 
pagebackref=false,hidelinks]{hyperref}

\definecolor{refkey}{gray}{.75}
\definecolor{labelkey}{gray}{.5}

\usetikzlibrary{calc,intersections,through,backgrounds,shapes,patterns}

\newtheorem{theorem}{Theorem}

\newtheorem{lemma}[theorem]{Lemma}

\theoremstyle{definition}
\newtheorem{definition}[theorem]{Definition}

\newtheorem{setup}[theorem]{Setup}
\theoremstyle{remark}
\newtheorem*{remark}{Remark}

\newcommand{\abs}[1]{\left\lvert#1\right\rvert}

\newcommand{\norm}[1]{\left\lVert#1\right\rVert}

\newcommand{\ol}{\overline}

\newcommand{\EE}{\mathbb{E}}
\newcommand{\RR}{\mathbb{R}}

\newcommand{\ZZ}{\mathbb{Z}}

\newcommand{\og}{\bar{g}}

\title{Linear forms from the Gowers uniformity norm}
\author{David Conlon}
\address{Mathematical Institute\\
Oxford OX1 3LB\\
United Kingdom}
\email{david.conlon@maths.ox.ac.uk}

\author{Jacob Fox}
\address{Department of Mathematics\\
MIT\\
Cambridge\\
MA 02139-4307}
\email{fox@math.mit.edu}

\author{Yufei Zhao}
\address{Department of Mathematics\\
MIT\\
Cambridge\\
MA 02139-4307}
\email{yufeiz@math.mit.edu}

\begin{document}

\maketitle

This is a companion note to \cite{CFZrelsz} elaborating on the
concluding remark in \S\ref{sec:concluding} under the heading
\emph{Gowers uniformity norms}.
The purpose of this note is to sketch the argument showing that the relative
Szemer\'edi theorem, Theorem~\ref{thm:rel-sz} in \cite{CFZrelsz}, for $(r+1)$-term arithmetic progressions holds when the linear forms condition on $\nu \colon
\ZZ_N \to \RR_{\geq 0}$ is replaced by an alternate
condition on the Gowers uniformity norm $U^r$:
\begin{equation}\label{eq:Gowers-power}
\norm{\nu - 1}_{U^r}  = o(p^r), \quad \text{where } p :=
1/\norm{\nu}_\infty \leq 1.
\end{equation}
Recall that the Gowers uniformity norm $U^r$ is defined by
\[\norm{f}_{U^r}= \EE\Bigl[ \prod_{\omega \in \{0,1\}^r}
  f(x_0+\omega \cdot {\bf x})\Big\vert x_0,x_1,\ldots,x_r \in \ZZ_N \Bigr]^{1/2^{r}}.
  \]
The application we have in mind is $\nu = p^{-1}1_S$ where $S
\subseteq \ZZ_N$ satisfies $S \subseteq \ZZ_N$ and $p = \abs{S}/N$.

We do not give all the details in this note and we also assume
familiarity with \cite{CFZrelsz}.
 We sketch how to modify the argument in \cite{CFZrelsz} to
show the result under the assumption \eqref{eq:Gowers-power}. As noted
in Footnote~\ref{ft:conditions} on page~\pageref{ft:conditions} of~\cite{CFZrelsz}, the
only hypotheses needed for the proof of the counting lemma are the
strong linear forms condition, as in Lemma~\ref{lem:strong-linear-forms},
and also \eqref{eq:lfc-nu'} in \cite{CFZrelsz}. The Gowers uniformity
hypotheses also implies the conclusion of Lemma~\ref{lem:nu-1-disc}, which gives
the conclusion of Lemma~\ref{lem:nu-upper-regular}, thereby allowing us to apply the
weak regularity lemma, Theorem~\ref{thm:weak-reg-hyp}.

\medskip

As in \cite{CFZrelsz}, we work in the hypergraph setting. Recall that
for a finite set $e$, we write $V_e = \prod_{j \in e} V_j$, where
each $V_j$ is a finite set. We assume this notation for
Definition~\ref{def:Ue} and Lemmas~\ref{lem:GCS} and
\ref{lem:nu-Gowers}.

\begin{definition}[Gowers uniformity norm] \label{def:Ue}
  For any function $g \colon V_e \to \RR$, define
  \[
  \norm{g}_{U^e} := \EE\Bigl[ \prod_{\omega \in \{0,1\}^e}
  g(x_e^{(\omega)})  \Big\vert x_e^{(0)}, x_e^{(1)} \in V_e \Bigr]^{1/2^{\abs{e}}}.
  \]
\end{definition}

There are two notions of Gowers uniformity norm: one for functions $\ZZ_N
\to \RR$ and one for functions $V_e \to \RR$. Observe that the
representation of $\nu \colon \ZZ_N \to \RR_{\geq 0}$ by a
weighted hypergraph $\nu$ in the
proof of the relative Szemer\'edi theorem \cite[\S\ref{sec:relative-szemeredi}]{CFZrelsz} preserves the Gowers
uniformity norm.

The following inequality is the Gowers-Cauchy-Schwarz inequality for
hypergraphs. The proof is by $r$
applications of the standard Cauchy-Schwarz inequality.

\begin{lemma}[Gowers-Cauchy-Schwarz inequality]
  \label{lem:GCS}
  For any collection of functions $g_\omega \colon V_e \to \RR$,
  $\omega \in \{0,1\}^e$, one has
\[
\Bigl\lvert
\EE\Bigl[ \prod_{\omega \in \{0,1\}^e} g_\omega(x_e^{(\omega)})
\Big\vert x_e^{(0)}, x_e^{(1)} \in V_e \Bigr] \Bigr\rvert
\leq \prod_{\omega \in \{0,1\}^e} \norm{g_\omega}_{U^e}.
\]
\end{lemma}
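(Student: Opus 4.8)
The plan is to iterate the ordinary Cauchy--Schwarz inequality once for each of the $\abs{e}$ coordinates, peeling off one pair of variables $(x_j^{(0)},x_j^{(1)})$ at a time; this is the hypergraph analogue of the standard derivation of the Gowers--Cauchy--Schwarz inequality on $\ZZ_N$. Write
\[
\Lambda\bigl((g_\omega)_\omega\bigr) := \EE\Bigl[\prod_{\omega \in \{0,1\}^e} g_\omega(x_e^{(\omega)}) \,\Big\vert\, x_e^{(0)},x_e^{(1)} \in V_e\Bigr]
\]
for the multilinear quantity on the left-hand side, so that by Definition~\ref{def:Ue} we have $\norm{g}_{U^e}^{2^{\abs{e}}} = \Lambda((g,\ldots,g))$ (in particular $\Lambda((g,\ldots,g))\geq0$) when all the $g_\omega$ equal a single function $g$; the goal is $\abs{\Lambda((g_\omega)_\omega)} \leq \prod_{\omega}\norm{g_\omega}_{U^e}$.

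The heart of the matter is one application of Cauchy--Schwarz. Fix $i \in e$, identify a point of $V_e$ with the pair consisting of its $V_{e \setminus i}$-coordinate and its $V_i$-coordinate, and accordingly write $\omega=(\omega',\omega_i)$ with $\omega'\in\{0,1\}^{e\setminus i}$. The factors of $\prod_\omega g_\omega(x_e^{(\omega)})$ with $\omega_i=0$ depend on $x_i^{(0)}$ but not $x_i^{(1)}$, and vice versa, so the expectation over $(x_i^{(0)},x_i^{(1)})$ factors and
\[
\Lambda\bigl((g_\omega)_\omega\bigr) = \EE\bigl[\, G_0 \cdot G_1 \,\bigr], \qquad G_b := \EE_{u\in V_i}\Bigl[\prod_{\omega'\in\{0,1\}^{e\setminus i}} g_{(\omega',b)}\bigl(x_{e\setminus i}^{(\omega')},u\bigr)\Bigr],
\]
where the outer expectation is over the variables $x_j^{(c)}$ with $j\in e\setminus i$. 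Cauchy--Schwarz gives $\abs{\Lambda((g_\omega)_\omega)} \leq (\EE[G_0^2])^{1/2}(\EE[G_1^2])^{1/2}$, and expanding $\EE[G_b^2]$ with a second independent copy of the $V_i$-variable and relabelling the two copies as $x_i^{(0)},x_i^{(1)}$ shows that $\EE[G_b^2] = \Lambda\bigl((g_{\sigma_i^b(\omega)})_\omega\bigr)$, where $\sigma_i^b\colon\{0,1\}^e\to\{0,1\}^e$ resets the $i$-th coordinate to $b$. Moreover $\EE[G_b^2]\geq 0$, so taking the square root is legitimate. Thus one Cauchy--Schwarz step in coordinate $i$ bounds $\abs{\Lambda((g_\omega)_\omega)}$ by the product of the square roots of $\Lambda\bigl((g_{\sigma_i^0(\omega)})_\omega\bigr)$ and $\Lambda\bigl((g_{\sigma_i^1(\omega)})_\omega\bigr)$, each of which is a nonnegative quantity of exactly the same shape.

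I would then iterate this over all $\abs{e}$ coordinates of $e$. After processing every direction, $\abs{\Lambda((g_\omega)_\omega)}$ is bounded by a product of $2^{\abs{e}}$ factors, one for each $\beta\in\{0,1\}^e$, each raised to the power $2^{-\abs{e}}$; the factor attached to $\beta$ is $\Lambda$ evaluated at the system obtained by applying the composite of all the coordinate-resetting maps, which sends every $\omega$ to $\beta$, hence equals $\Lambda\bigl((g_\beta,\ldots,g_\beta)\bigr)=\norm{g_\beta}_{U^e}^{2^{\abs{e}}}$. This gives $\abs{\Lambda((g_\omega)_\omega)}\leq\prod_{\beta\in\{0,1\}^e}\norm{g_\beta}_{U^e}$, as required. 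There is no real obstacle here: the argument is genuinely just $\abs{e}$ invocations of Cauchy--Schwarz, and the only things to keep track of are the bookkeeping of the coordinate-resetting maps and the exponents across the iteration, together with the observation — automatic since each relevant quantity is visibly an expectation $\EE[G_b^2]$ of a square after the first step — that every number to which a fractional power is applied is nonnegative.
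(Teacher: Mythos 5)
Your proof is correct and follows essentially the same route as the paper: iterating the ordinary Cauchy--Schwarz inequality once per coordinate of $e$, which is exactly what the paper does (it writes out the $\abs{e}=2$ case explicitly and declares the general case straightforward). Your bookkeeping via the coordinate-resetting maps $\sigma_i^b$ and the explicit note that each intermediate quantity is a nonnegative $\EE[G_b^2]$ is a clean way to organize the general induction, but it is the same argument.
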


To illustrate the proof of
Lemma~\ref{lem:GCS}, we consider the case $\abs{e} = 2$. We have
\begin{align*}
 & \EE[g_{00}(x,y) g_{01}(x,y') g_{10}(x',y) g_{11}(x',y') \vert x,x'
  \in V_1,\ y,y' \in V_2]^4
  \\
  &=
    \EE[ \EE[g_{00}(x,y) g_{01}(x,y') \vert x \in V_1]
    \EE[g_{10}(x',y) g_{11}(x',y') \vert x' \in V_2] \vert
  y,y' \in V_2]^4
  \\
  &\leq  \EE[ \EE[g_{00}(x,y) g_{01}(x,y') \vert x \in V_1]^2 \vert
  y,y'\in V_2]^2
    \EE[ \EE[g_{10}(x',y) g_{11}(x',y') \vert x' \in V_2]^2 \vert
  y,y' \in V_2]^2
    \\
  &=  \EE[ g_{00}(x,y) g_{00}(x',y) g_{01}(x,y') g_{01}(x',y') \vert x,x' \in V_1,\
  y,y'\in V_2]^2 \\
  &\qquad\qquad
    \EE[ g_{10}(x,y) g_{10}(x',y) g_{11}(x,y') g_{11}(x',y') \vert x,x' \in V_1,\
  y,y' \in V_2]^2
    \\
  &= \EE[ \EE[g_{00}(x,y) g_{00}(x',y) \vert y \in V_2]
  \EE[g_{01}(x,y') g_{01}(x',y') \vert y' \in V_2] \vert x,x' \in V_1]^2 \\
  &\qquad\qquad
    \EE[\EE[g_{10}(x,y) g_{10}(x',y) \vert y \in V_2] \EE[g_{11}(x,y')
    g_{11}(x',y') \vert y'\in V_2] \vert x,x' \in V_1]^2
\\
  &\leq \EE[ \EE[g_{00}(x,y) g_{00}(x',y) \vert y \in V_2]^2 \vert
  x,x' \in V_1]
  \EE[\EE[g_{01}(x,y') g_{01}(x',y') \vert y' \in V_2]^2 \vert x,x' \in V_1] \\
  &\qquad\qquad
    \EE[\EE[g_{10}(x,y) g_{10}(x',y) \vert y \in V_2]^2 \vert x,x'\in V_1] \EE[\EE[g_{11}(x,y')
    g_{11}(x',y') \vert y'\in V_2]^2 \vert x,x' \in V_1]
\\
  &= \EE[ g_{00}(x,y) g_{00}(x',y) g_{00}(x,y') g_{00}(x',y') \vert
  x,x' \in V_1,\ y,y' \in V_2] \\
  &\qquad \EE[ g_{01}(x,y) g_{01}(x',y) g_{01}(x,y') g_{01}(x',y') \vert
  x,x' \in V_1,\ y,y' \in V_2] \\
  &\qquad \EE[ g_{10}(x,y) g_{10}(x',y) g_{10}(x,y') g_{10}(x',y') \vert
  x,x' \in V_1,\ y,y' \in V_2] \\
  &\qquad \EE[ g_{11}(x,y) g_{11}(x',y) g_{11}(x,y') g_{11}(x',y') \vert
  x,x' \in V_1,\ y,y' \in V_2]
  \\
  &= (\norm{g_{00}}_{U^2} \norm{g_{01}}_{U^2}\norm{g_{10}}_{U^2}\norm{g_{11}}_{U^2})^4.
\end{align*}
Both inequalities above are due to the usual Cauchy-Schwarz
inequality. The extension to the general case is straightforward.

\medskip

The following lemma relates the Gowers uniformity norm condition to
certain linear forms within $V_e$.

\begin{lemma}
  \label{lem:nu-Gowers}
  If $\nu_e \colon V_e \to \RR_{\geq 0}$ satisfies $\norm{\nu_e -
    1}_{U^e} = o(1)$, then
  \begin{equation} \label{eq:nu-Gowers}
  \EE\Bigl[ \prod_{\omega \in \{0,1\}^e} \nu_e(x_e^{(\omega)})^{n_{\omega}}
\Big\vert x_e^{(0)}, x_e^{(1)} \in V_e \Bigr] = 1 + o(1)
\end{equation}
for any choices of exponents $n_\omega \in \{0,1\}$.
\end{lemma}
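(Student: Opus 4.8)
The plan is to expand $\nu_e = 1 + (\nu_e - 1)$ inside the product and use the Gowers–Cauchy–Schwarz inequality from Lemma~\ref{lem:GCS} to control every error term. Write $g := \nu_e - 1$, so that $\norm{g}_{U^e} = o(1)$. For a fixed choice of exponents $(n_\omega)_{\omega \in \{0,1\}^e}$, let $S = \{\omega : n_\omega = 1\}$; then the left-hand side of \eqref{eq:nu-Gowers} is
\[
\EE\Bigl[ \prod_{\omega \in S} \nu_e(x_e^{(\omega)}) \Big\vert x_e^{(0)}, x_e^{(1)} \in V_e \Bigr]
= \sum_{T \subseteq S} \EE\Bigl[ \prod_{\omega \in T} g(x_e^{(\omega)}) \Big\vert x_e^{(0)}, x_e^{(1)} \in V_e \Bigr],
\]
by writing each factor $\nu_e = 1 + g$ and multiplying out. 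The $T = \varnothing$ term equals $1$, so it suffices to show that each term with $T \neq \varnothing$ is $o(1)$.

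For a fixed nonempty $T \subseteq \{0,1\}^e$, apply Lemma~\ref{lem:GCS} with $g_\omega = g$ for $\omega \in T$ and $g_\omega = 1$ for $\omega \notin T$. This gives
\[
\Bigl\lvert \EE\Bigl[ \prod_{\omega \in T} g(x_e^{(\omega)}) \Big\vert x_e^{(0)}, x_e^{(1)} \in V_e \Bigr] \Bigr\rvert
\leq \norm{g}_{U^e}^{\abs{T}} \cdot \norm{1}_{U^e}^{2^{\abs{e}} - \abs{T}}
= \norm{g}_{U^e}^{\abs{T}},
\]
since $\norm{1}_{U^e} = 1$. As $\abs{T} \geq 1$ and $\norm{g}_{U^e} = o(1)$, each such term is $o(1)$. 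Summing over the at most $2^{2^{\abs{e}}}$ subsets $T$ (a bounded number, since $e$ is fixed), the total error is $o(1)$, giving \eqref{eq:nu-Gowers}.

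The only point requiring a little care is the bookkeeping: one must observe that $\norm{1}_{U^e} = 1$ (immediate from Definition~\ref{def:Ue}, since the constant function $1$ makes the inner expectation equal to $1$), and that the number of exponent patterns and the number of subsets $T$ are bounded by a constant depending only on $\abs{e}$, which we regard as fixed, so the accumulated $o(1)$ errors remain $o(1)$. There is no real obstacle here; the content is entirely in the Gowers–Cauchy–Schwarz inequality already established in Lemma~\ref{lem:GCS}, and the argument is a direct telescoping expansion.
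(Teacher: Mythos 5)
Your proof is correct and follows essentially the same route as the paper: both expand each factor as $((\nu_e-1)+1)^{n_\omega}$ and bound every non-constant term of the expansion by the Gowers--Cauchy--Schwarz inequality of Lemma~\ref{lem:GCS}, using $\norm{1}_{U^e}=1$ and $\norm{\nu_e-1}_{U^e}=o(1)$. Your version merely makes the bookkeeping over subsets $T\subseteq S$ explicit, which the paper leaves implicit.
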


\begin{proof}
  Applying the Gowers-Cauchy-Schwarz inequality, Lemma~\ref{lem:GCS},
  applied with $g_\omega(x_e) = (\nu_e(x_e) - 1)^{n_\omega}$, one gets
    \begin{equation} \label{eq:nu-1-Gowers}
  \EE\Bigl[ \prod_{\omega \in \{0,1\}^e} (\nu_e(x_e^{(\omega)}) - 1)^{n_{\omega}}
\Big\vert x_e^{(0)}, x_e^{(1)} \in V_e \Bigr]
= o(1)
\end{equation}
for any choice of exponents $n_\omega \in \{0,1\}$, as long as they
  are not all zero.
We can write the
  left-hand side of \eqref{eq:nu-Gowers} as
\[
  \EE\Bigl[ \prod_{\omega \in \{0,1\}^e}
  ((\nu_e(x_e^{(\omega)})-1) + 1)^{n_{\omega}}
\Big\vert x_e^{(0)}, x_e^{(1)} \in V_e \Bigr].
\]
The result follows by expanding each parenthesis $((\nu_e(x_e^{(\omega)})^{n_{\omega}}-1) +
1)$ and bounding each term (except for the constant term) using \eqref{eq:nu-1-Gowers}.
\end{proof}

For the rest of this note, we assume the following hypergraph system
setup. Recall that this is the hypergraph system used in the proof of
the relative Szemer\'edi theorem in \cite{CFZrelsz}.

\begin{setup} \label{set:sz-hyp}
  Let $J = \{0,1,2,\dots,r\}$ and $H = \binom{J}{r}$. Write $e_j :=
  J\setminus \{j\} \in H$ for every $j \in J$. Let $V = (J, (V_j)_{j \in J}, r,
H)$ be a hypergraph system. Note that $H$ is the complete $r$-uniform
hypergraph on $r+1$ vertices.
\end{setup}

For a weighted hypergraph $\nu$ on $V$, we write $\norm{\nu}_\infty$ to mean the maximum value taken by any
$\nu_e$, $e \in H$. Throughout we assume that
$\norm{\nu}_\infty \geq 1$.

\medskip

The next two lemmas show that the inputs to the proof of the counting lemma in
\cite{CFZrelsz}  (see
Footnote~\ref{ft:conditions} on page~\pageref{ft:conditions}) remain valid when we assume that
\[
\norm{\nu_e - 1}_{U^e} = o(\norm{\nu}_\infty^{-r}) \quad\text{for all
} e \in H.
\]

\begin{lemma}[Strong linear forms]
  \label{lem:Gowers-strong-linear-forms}
Assume Setup~\ref{set:sz-hyp}. Let
  $\nu$ be a weighted hypergraph on $V$ satisfying
  \[
  \norm{\nu_e - 1}_{U^e} = o(1) \text{ for all } e \in H\setminus\{e_0\}.
  \]
  For each $\iota \in \{0,1\}$ and $e \in H\setminus\{e_0\}$, let
  $g^{(\iota)}_e \colon V_e \to \RR_{\geq 0}$ be a function so that either $g_e^{(\iota)} \leq 1$ or
  $g_e^{(\iota)}\leq \nu_e$ holds. Then
  \begin{multline}
    \label{eq:Gowers-slf}
    \Bigl\lvert \EE\Bigl[
    (\nu_{e_0}(x_{e_0}) - 1)
    \prod_{\iota \in \{0,1\}} \Bigl( \prod_{e \in H \setminus \{e_0\}}
    g_e^{(\iota)}(x_0^{(\iota)}, x_{e\setminus\{0\}})
    \Bigr)
    \Big\vert
    x_0^{(0)},x_0^{(1)} \in V_0, \ x_{e_0} \in V_{e_0}
    \Bigr]\Bigr\rvert \\
    \leq (1 + o(1)) \norm{\nu_{e_0} - 1}_{U^{e_0}} \norm{\nu}_{\infty}^r.
  \end{multline}
\end{lemma}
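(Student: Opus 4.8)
The plan is to bound the expectation on the left of \eqref{eq:Gowers-slf}, call it $\Lambda$, by $r$ successive applications of the Cauchy--Schwarz inequality, ``doubling'' the variables $x_1,\dots,x_r$ one at a time, exactly as in the proof of the ordinary weighted counting lemma. Write $e_0=\{1,\dots,r\}$ and $e_j=J\setminus\{j\}$, so that $e_j\setminus\{0\}=e_0\setminus\{j\}$ for $j\geq 1$; thus each $g^{(\iota)}_{e_j}$ is a function of $x_0^{(\iota)}$ and of $(x_k)_{k\in e_0\setminus\{j\}}$, hence does not involve $x_j$ but does involve every $x_k$ with $k\in e_0$, $k\neq j$. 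The point is that the factor $\nu_{e_0}-1$ will sit on the non-squared side at every step, so after all $r$ steps it has been replicated over the whole cube $\{0,1\}^{e_0}$ and what is left of it is exactly $\norm{\nu_{e_0}-1}_{U^{e_0}}^{2^{r}}$.

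Two preliminary observations feed the argument. First, because $\norm{\nu}_\infty\geq 1$, every $g^{(\iota)}_e$ is bounded pointwise by $\norm{\nu}_\infty$ (it is $\leq 1$, or $\leq \nu_e\leq\norm{\nu}_\infty$), whence $\bigl(g^{(\iota)}_e\bigr)^2\leq\norm{\nu}_\infty\, g^{(\iota)}_e\leq\norm{\nu}_\infty\,\ol\nu^{(\iota)}_e$, where $\ol\nu^{(\iota)}_e\in\{1,\nu_e\}$ is whichever majorant $g^{(\iota)}_e$ admits. Second, applying Lemma~\ref{lem:nu-Gowers} to each $\nu_e$ with $e\in H\setminus\{e_0\}$ (these are $e_1,\dots,e_r$, for which $\norm{\nu_e-1}_{U^e}=o(1)$ is assumed), and choosing the exponents $n_\omega$ to be $1$ on one coordinate sub-cube of $\{0,1\}^e$ and $0$ elsewhere — so that the unused copies of the held-fixed coordinates are averaged away — gives that any expectation of a product of copies of $\nu_e$, or of $1$, over the vertices of a coordinate sub-cube of $\{0,1\}^e$ equals $1+o(1)$. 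This is the only place the Gowers hypotheses on $\nu_{e_1},\dots,\nu_{e_r}$ are used.

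For the iteration, suppose that entering step $j$ (for $j=1,\dots,r$) the running quantity is an expectation — over $x_0^{(0)},x_0^{(1)}$, over $x_k^{(0)},x_k^{(1)}$ for $k<j$, and over $x_k^{(0)}$ for $k\geq j$ — whose integrand is a product of $2^{\,j-1}$ copies of $\nu_{e_0}-1$ (one for each choice of $x_k\in\{x_k^{(0)},x_k^{(1)}\}$, $k<j$) and, for each $k\geq j$ and each $\iota$, the $2^{\,j-1}$ copies of $g^{(\iota)}_{e_k}$ produced by the earlier doublings; for $j=1$ this is $\Lambda$ itself. Since $g^{(\iota)}_{e_j}$ does not involve $x_j$ while every surviving copy of $\nu_{e_0}-1$ and every surviving $g^{(\iota)}_{e_k}$ with $k>j$ does, I apply Cauchy--Schwarz with the $2^{\,j}$ copies of $g^{(0)}_{e_j},g^{(1)}_{e_j}$ on the squared side and everything else on the doubled side; this creates the variable $x_j^{(1)}$ and returns a quantity of the same form with $j$ advanced. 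By the first observation the squared side is at most $\norm{\nu}_\infty^{2^{j}}$ times an expectation of a product of copies of $\ol\nu_{e_j}$ over the vertices of a coordinate sub-cube of $\{0,1\}^{e_j}$ (the full cube when $j=r$), and by the second observation this is $(1+o(1))\norm{\nu}_\infty^{2^{j}}$. After step $r$ the doubled side is exactly $\EE\bigl[\prod_{\omega\in\{0,1\}^{e_0}}\bigl(\nu_{e_0}(x_{e_0}^{(\omega)})-1\bigr)\bigr]=\norm{\nu_{e_0}-1}_{U^{e_0}}^{2^{r}}$.

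Collecting these, if the inequality obtained after $j$ steps reads $|\Lambda|^{2^{j}}\leq(1+o(1))\norm{\nu}_\infty^{c_j}(\text{remaining expectation})$, then squaring it and inserting the step-$(j+1)$ bound $\norm{\nu}_\infty^{2^{j+1}}$ gives $c_{j+1}=2c_j+2^{\,j+1}$ with $c_1=2$, so $c_j=j\,2^{j}$; after step $r$ one gets $|\Lambda|^{2^{r}}\leq(1+o(1))\norm{\nu}_\infty^{\,r\,2^{r}}\norm{\nu_{e_0}-1}_{U^{e_0}}^{2^{r}}$, and taking $2^{r}$-th roots yields \eqref{eq:Gowers-slf}. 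The main obstacle is purely bookkeeping: checking at each stage exactly which factors depend on $x_j$, that $g_{e_j}$ has accumulated precisely $2^{\,j-1}$ copies for each value of $\iota$, that the leftover weight after each Cauchy--Schwarz is a product of copies of $\ol\nu_{e_j}$ over a coordinate sub-cube to which Lemma~\ref{lem:nu-Gowers} applies (a proper sub-cube for $j<r$, the full cube for $j=r$), and that the exponents of $\norm{\nu}_\infty$ telescope as claimed.
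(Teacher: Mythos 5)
Your proposal is correct and follows essentially the same route as the paper's proof: $r$ iterated applications of Cauchy--Schwarz doubling one vertex of $e_0$ at a time, with the factors $g_{e_j}^{(\iota)}$ (the ones not involving $x_j$) placed on the squared side, the square bounded via $(g)^2 \leq \norm{\nu}_\infty\, \ol{g}\,$ termwise, and Lemma~\ref{lem:nu-Gowers} evaluating the resulting sub-cube expectation as $1+o(1)$. The paper organizes the same induction through quantities $Q_d$ indexed by subsets $d \subseteq e_0$, but your exponent recursion $c_j = j\,2^j$ is exactly equivalent to its one-step inequality~\eqref{eq:Gowers-slf-cs}.
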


\begin{proof}
  For each $\iota = 0,1$ and $e \in H\setminus\{e_0\}$, let $\og^{(\iota)}_e$ be either $1$ or $\nu_e$ so that $g^{(\iota)}_e \leq
  \og^{(\iota)}_e$ holds. For $\emptyset \subseteq d \subseteq e_0$, define
  \begin{align*}
    X_d &:= \prod_{\omega \in \{0,1\}^d}(\nu_{e_0}(x_{e_0\setminus d},
    x_d^{(\omega)}) - 1), \\
    Y_d &:= \prod_{\iota \in \{0,1\}} \prod_{\substack{e \in H
        \setminus \{e_0\} \\ e \supseteq d}}
    \prod_{\omega \in \{0,1\}^{d}}
          g_e^{(\iota)}(x_0^{(\iota)}, x_{d}^{(\omega)}, x_{e
            \setminus(d \cup \{0\})}),        \end{align*}
and
\[
        Q_d := \EE \bigl[ X_dY_d \big\vert x^{(0)}_{d \cup
          \{0\}},x^{(1)}_{d \cup \{0\}} \in V_{d \cup \{0\}}, \ x_{e_0 \setminus d} \in V_{e_0 \setminus
    d} \bigr].
\]
We observe that $\abs{Q_\emptyset}$ is equal to the left-hand side of
  \eqref{eq:Gowers-slf} and
  \[
  Q_{e_0} = \EE \Bigl[ \prod_{\omega \in \{0,1\}^{e_0}} (
  \nu_{e_0}(x_{e_0}^{(\omega)}) - 1)
  \Big\vert
  x_J^{(0)},x_J^{(1)} \in V_J
  \Bigr] = \norm{\nu_{e_0} - 1}_{U^{e_0}}^{2^r}.
  \]
  We claim that if $j \in e_0 \setminus d$ then
  \begin{equation}
    \label{eq:Gowers-slf-cs}
    \lvert Q_d \rvert^{1/2^{\abs{d}}}
    \leq (1 + o(1)) Q_{d \cup
      \{j\}}^{1/2^{\abs{d}+1}} \norm{\nu}_\infty,
  \end{equation}
  from which it would follow by induction that
  \[
  \abs{\text{LHS of \eqref{eq:Gowers-slf}}} = \abs{Q_\emptyset} \leq (1 +
  o(1))Q_{e_0}^{1/2^r} \norm{\nu}_\infty^r
  = (1+o(1)) \norm{\nu_{e_0} - 1}_{U^{e_0}} \norm{\nu}_\infty^r
  \]
  as desired.
  Now we prove~\eqref{eq:Gowers-slf-cs}. Let $Y_d = Y_d^{\ni j} Y_d^{\not\ni
    j}$ where $Y_d^{\ni j}$ consists of all the factors in $Y_d$ that
  contain $x_j$ in the argument, and $Y_d^{\not\ni j}$ consists of all
  other factors. Let $\ol Y_d^{\not\ni
    j}$ denote $Y_d^{\not\ni j}$ with all $g^{(\iota)}$ replaced by $\ol
  g^{(\iota)}$. Using the Cauchy-Schwarz inequality and $Y_d^{\not\ni j} \leq \ol Y_d^{\not\ni
    j}$ one has\footnote{The key difference between this argument and the
    proof of Lemma~\ref{lem:strong-linear-forms} in \cite{CFZrelsz} is
    that here we use the Cauchy-Schwarz inequality to bound by $\EE [\EE[ X_d Y_d^{\ni j} \vert x_j \in V_j]^2
    ] \ \EE [(\ol Y_d^{\not\ni j})^2]$, which contains an undesirable
    square $(\ol Y_d^{\not\ni j})^2$, whereas in \cite{CFZrelsz} we
    bound by $\EE [\EE[ X_d Y_d^{\ni j} \vert x_j \in V_j]^2 \ol Y_d^{\not\ni j}
    ] \ \EE [\ol Y_d^{\not\ni j}]$ so that there is no loss in terms of $\norm{\nu}_{\infty}$.}
  \begin{align}
    Q_d^2 &=
    \EE [ \EE[ X_d Y_d^{\ni j} \vert x_j \in V_j]
    Y_d^{\not\ni j} ]^2
\leq
    \EE [\EE[ X_d Y_d^{\ni j} \vert x_j \in V_j]^2
    ] \
    \EE [(Y_d^{\not\ni j})^2 ] \nonumber
    \\
    &\leq
    \EE [\EE[ X_d Y_d^{\ni j} \vert x_j \in V_j]^2
    ] \ \EE [(\ol Y_d^{\not\ni j})^2]
= Q_{d\cup\{j\}} \ \EE [(\ol Y_d^{\not\ni j})^2], \label{eq:Gowers-slf-post-cs}
  \end{align}
  where the outer expectations are taken over all free variables. Note
  that
  \[
  \ol Y_d^{\not\ni j} = \prod_{\iota \in \{0,1\}} \prod_{\omega \in
    \{0,1\}^d}
  \og_{e_j}^{(\iota)}(x_0^{(\iota)},x_d^{(\omega)},x_{e
    \setminus (d\cup\{0\})})
  \]
  is the product of at most $2^{\abs{d}+1}$
  factors of the norm $\nu_{e_j}$. So
  \[
  (\ol Y_d^{\not\ni j})^2 \leq
   \ol Y_d^{\not\ni j} \sup Y_d^{\not\ni j}
   \leq
   \ol Y_d^{\not\ni j} \norm{\nu}_\infty^{2^{\abs{d}+1}}.
  \]
  Since $\norm{\nu_{J\setminus\{j\}} - 1}_{U^{J\setminus\{j\}}} = o(1)$,
  Lemma~\ref{lem:nu-Gowers} implies that $\EE[\ol Y_d^{\not\ni j}]
  = 1 + o(1)$. Thus
  \[
  \EE[(\ol Y_d^{\not\ni j})^2]
  \leq (1+o(1))\norm{\nu}_\infty^{2^{\abs{d}+1}}.
  \]
  So \eqref{eq:Gowers-slf-post-cs} implies \eqref{eq:Gowers-slf-cs},
  as desired.
\end{proof}

\begin{remark}
A straightforward modification of the proof shows that if $g_e \colon
V_e \to \RR_{\geq 0}$ is a function so that $g_e \leq \nu_e$ or $g_e
\leq 1$ for every $e
\in H\setminus\{e_0\}$, then
\[
    \Bigl\lvert \EE\Bigl[
    (\nu_{e_0}(x_{e_0}) - 1)
    \prod_{e \in H \setminus \{e_0\}}
    g_e(x_e)
    \Big\vert
    x_J \in V_J
    \Bigr]\Bigr\rvert \\
    \leq (1 + o(1)) \norm{\nu_{e_0} - 1}_{U^{e_0}} \norm{\nu}_{\infty}^{r/2}.
\]
Indeed, in the proof, the corresponding $\ol Y_d^{\not\ni j}$ now has at
most only $2^{\abs{d}}$ factors, so that $(\ol Y_d^{\not\ni j})^2$ can be
bounded by $\ol Y_d^{\not\ni j} \norm{\nu}_\infty^{2^{\abs{d}}}$, thereby
saving a factor of $2$ in the exponent of $\norm{\nu}_\infty$.
This implies that if $S \subseteq \ZZ_N$, $\nu = p^{-1}1_S$, and $\norm{\nu -
  1}_{U^r} = o(p^{r/2})$ then $S$ contains approximately the correct count
  of $(r+1)$-term arithmetic progressions. This was mentioned in the concluding
  remarks of~\cite{CFZrelsz}.
\end{remark}

\begin{lemma} \label{lem:Gowers-(nu-1)^2}
 Assume Setup~\ref{set:sz-hyp}. Let $\nu$ be a weighted hypergraph on
 $V$ satisfying
 \[
 \norm{\nu_e - 1}_{U^e} = o(\norm{\nu}_\infty^{-r+1}) \text{ for all
 } e \in H.
 \]
 Define $\nu'_{e_0} \colon V_{e_0} \to \RR_{\geq 0}$ by
 \[
 \nu'_{e_0}(x_{e_0}) := \EE\Bigl[ \prod_{e \in H\setminus \{e_0\}}
 \nu_e(x_e) \Big\vert x_0 \in V_0 \Bigr].
 \]
 Then
 \begin{equation} \label{eq:Gowers-(nu-1)^2}
 \EE[(\nu'_{e_0} - 1)^2] = o(1).
\end{equation}
\end{lemma}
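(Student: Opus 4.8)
The plan is to expand $\nu'_{e_0}$ into pieces indexed by subsets of $\{1,\dots,r\}$, reduce \eqref{eq:Gowers-(nu-1)^2} to a Gowers-norm bound on each piece, and prove that bound by an iterated Cauchy--Schwarz argument of the same shape as the proof of Lemma~\ref{lem:Gowers-strong-linear-forms}.

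First I would expand: writing $\nu_{e_j}=1+(\nu_{e_j}-1)$ inside the definition of $\nu'_{e_0}$ and using that the empty product contributes $\EE[1\mid x_0\in V_0]=1$, one gets
\[
\nu'_{e_0}(x_{e_0})-1=\sum_{\emptyset\neq A\subseteq\{1,\dots,r\}}g_A(x_{e_0}),
\qquad
g_A(x_{e_0}):=\EE\Bigl[\prod_{j\in A}\bigl(\nu_{e_j}(x_{e_j})-1\bigr)\,\Big\vert\,x_0\in V_0\Bigr].
\]
As there are only $2^r-1$ summands, the Cauchy--Schwarz inequality reduces \eqref{eq:Gowers-(nu-1)^2} to showing $\EE[g_A^2]=o(1)$ for each nonempty $A$. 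Fix such an $A$, choose $j_0\in A$, and expand the square:
\[
\EE[g_A^2]=\EE\Bigl[\prod_{\iota\in\{0,1\}}\prod_{j\in A}\bigl(\nu_{e_j}(x_0^{(\iota)},x_{e_j\setminus\{0\}})-1\bigr)\,\Big\vert\,x_0^{(0)},x_0^{(1)}\in V_0,\ x_{e_0}\in V_{e_0}\Bigr],
\]
where $e_j\setminus\{0\}=\{1,\dots,r\}\setminus\{j\}$. The key observation is that the pair of factors $\nu_{e_{j_0}}(x_0^{(0)},\cdot)-1$ and $\nu_{e_{j_0}}(x_0^{(1)},\cdot)-1$ already furnishes the doubling of the $0$-coordinate in a $U^{e_{j_0}}$-corner; only the $r-1$ coordinates in $e_{j_0}\setminus\{0\}$ remain to be doubled.

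Next I would run the Cauchy--Schwarz. For each $k\in\{1,\dots,r\}\setminus\{j_0\}$ in turn, apply Cauchy--Schwarz in $x_k$ as in the derivation of \eqref{eq:Gowers-slf-cs}, splitting off the factors not containing $x_k$ and doubling $x_k$ at the cost of the square of those factors. Since $\nu_{e_j}-1$ fails to contain $x_k$ only when $j=k$, this ``undesirable square'' involves only copies of $\nu_{e_k}-1$ and is trivial unless $k\in A$; if there are $2^m$ such copies then, by the crude pointwise bound $\abs{\nu_{e_k}-1}\le\norm{\nu}_\infty$, the square is at most $\norm{\nu}_\infty^{2^{m+1}}$, which after the appropriate root contributes at most $\norm{\nu}_\infty^2$ to the final estimate. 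After all $r-1$ steps the copies of $\nu_{e_{j_0}}-1$ form a complete $U^{e_{j_0}}$-corner (the average over $x_{j_0}$ being trivial, as no surviving factor involves $x_{j_0}$), so the surviving expectation equals $\norm{\nu_{e_{j_0}}-1}_{U^{e_{j_0}}}^{2^r}$; raising it to the $2^{-(r-1)}$ power yields
\[
\EE[g_A^2]\ \le\ \norm{\nu_{e_{j_0}}-1}_{U^{e_{j_0}}}^{2}\,\norm{\nu}_\infty^{2(r-1)}.
\]
By hypothesis $\norm{\nu_{e_{j_0}}-1}_{U^{e_{j_0}}}=o(\norm{\nu}_\infty^{-r+1})$, so the right-hand side is $o(\norm{\nu}_\infty^{-2(r-1)})\cdot\norm{\nu}_\infty^{2(r-1)}=o(1)$, as required.

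The main obstacle is the bookkeeping in this iterated Cauchy--Schwarz, exactly as in Lemma~\ref{lem:Gowers-strong-linear-forms}: at each step one must track which factors carry $x_k$, how many copies of $\nu_{e_k}-1$ have accumulated, and verify that the accumulated roots amount to one factor $\norm{\nu}_\infty^2$ per step. The numerology is tight, and what makes it close is the observation above that $\EE[g_A^2]$ already contains the $0$-coordinate doubling: this is what produces the \emph{square} $\norm{\nu_{e_{j_0}}-1}_{U^{e_{j_0}}}^2$, which is exactly strong enough to absorb $\norm{\nu}_\infty^{2(r-1)}$, whereas a single power would only give the useless $o(\norm{\nu}_\infty^{r-1})$. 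In contrast to the footnote of Lemma~\ref{lem:Gowers-strong-linear-forms}, here the crude pointwise bound suffices and Lemma~\ref{lem:nu-Gowers} is not needed to control the undesirable squares.
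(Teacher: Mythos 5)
Your proof is correct, and the bookkeeping does close: at the step doubling $x_k$ (with $i-1$ coordinates already doubled) the split-off factors are exactly the $2^i$ copies of $\nu_{e_k}-1$, present only when $k\in A$; their square is at most $\norm{\nu}_\infty^{2^{i+1}}$ pointwise (using $\norm{\nu}_\infty\geq 1$), and the $2^{-i}$-th root contributes $\norm{\nu}_\infty^{2}$, so that $\EE[g_A^2]\leq \norm{\nu_{e_{j_0}}-1}_{U^{e_{j_0}}}^{2}\norm{\nu}_\infty^{2(r-1)}=o(1)$ as you claim, and the initial Cauchy--Schwarz over the $2^r-1$ pieces loses only a constant.

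Your route is, however, genuinely different from the paper's in how the expression is decomposed before the Cauchy--Schwarz engine is run. The paper expands the square $(\nu'_{e_0}-1)^2=\nu_{e_0}'^2-2\nu'_{e_0}+1$ and reduces to Lemma~\ref{lem:Gowers-lf-2}, the statement that every ``linear forms'' expectation $\EE\bigl[\prod_{e,\iota}\nu_e(x_0^{(\iota)},x_{e\setminus\{0\}})^{n_{e,\iota}}\bigr]$ equals $1+o(1)$; that lemma is proved by induction on the number of factors, replacing one factor by $\nu_{e_j}-1$ and running the iterated Cauchy--Schwarz while carrying along copies of $\nu_e$ themselves, whose secondary squares must be controlled via Lemma~\ref{lem:nu-Gowers}, yielding $(1+o(1))\norm{\nu}_\infty$ per step against a \emph{single} power of $\norm{\nu_{e_j}-1}_{U^{e_j}}$. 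You instead expand $\nu'_{e_0}-1$ multiplicatively into the pieces $g_A$, so that every carried-along factor is already a $(\nu-1)$-factor: this removes the induction, makes the crude pointwise bound sufficient (no appeal to Lemma~\ref{lem:nu-Gowers}), and, because $\EE[g_A^2]$ supplies the doubling of the $0$-coordinate for free, produces the \emph{square} of the Gowers norm against $\norm{\nu}_\infty^{2(r-1)}$ --- exactly the square of the paper's bound, hence requiring the identical hypothesis $\norm{\nu_e-1}_{U^e}=o(\norm{\nu}_\infty^{-r+1})$. What you give up is the more general counting statement of Lemma~\ref{lem:Gowers-lf-2} itself; what you gain is a shorter, more self-contained argument for the specific estimate \eqref{eq:Gowers-(nu-1)^2}.
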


Expanding \eqref{eq:Gowers-(nu-1)^2} we see that it suffices to prove
the following lemma.

\begin{lemma}
  \label{lem:Gowers-lf-2}
  Assume Setup~\ref{set:sz-hyp}. Let $\nu$ be a weighted hypergraph on
  $V$ satisfying
  \[
  \norm{\nu_e - 1}_{U^e} = o(\norm{\nu}_\infty^{-r+1}) \text{ for all
  } e \in H.
  \]
  We have
  \[
  \EE\Bigl[ \prod_{e \in H\setminus\{e_0\}} \prod_{\iota \in \{0,1\}}
  \nu_e(x_0^{(\iota)},x_{e \setminus\{0\}})^{n_{e,\iota}}
  \Big\vert
  x_0^{(0)}, x_0^{(1)} \in V_0,\
  x_{J\setminus\{0\}} \in V_{J\setminus\{0\}}
  \Bigr]
  = 1 + o(1)
  \]
  for any choices of exponents $n_{e,\iota} \in \{0,1\}$.
\end{lemma}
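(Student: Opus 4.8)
The plan is to expand the product into a main term plus contributions each containing at least one factor $\nu_e-1$, and to bound those contributions by a Cauchy--Schwarz induction of the kind used to prove Lemma~\ref{lem:Gowers-strong-linear-forms}. First I would expand: since each $n_{e,\iota}\in\{0,1\}$, the integrand is a product of certain of the functions $\nu_e(x_0^{(\iota)},x_{e\setminus\{0\}})$, $e\in H\setminus\{e_0\}$, $\iota\in\{0,1\}$; writing each as $1+(\nu_e(x_0^{(\iota)},x_{e\setminus\{0\}})-1)$ and multiplying out leaves the constant term $1$ plus at most $4^r$ further terms. So it suffices to show that for every nonempty set $S$ of pairs $(e,\iota)$ with $e\in H\setminus\{e_0\}$,
\[
\EE\Bigl[\prod_{(e,\iota)\in S}(\nu_e(x_0^{(\iota)},x_{e\setminus\{0\}})-1)\ \Big\vert\ x_0^{(0)},x_0^{(1)}\in V_0,\ x_{J\setminus\{0\}}\in V_{J\setminus\{0\}}\Bigr]=o(1).
\]

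Next I would symmetrize in the doubled coordinate. Grouping the factors by $\iota$ and setting $S_\iota=\{e:(e,\iota)\in S\}$, the expectation equals $\EE_{x_{J\setminus\{0\}}}[\phi_0\phi_1]$ with $\phi_\iota(x_{J\setminus\{0\}})=\EE_{x_0}[\prod_{e\in S_\iota}(\nu_e(x_0,x_{e\setminus\{0\}})-1)]$, so by Cauchy--Schwarz it is at most $\EE[\phi_0^2]^{1/2}\EE[\phi_1^2]^{1/2}$ in absolute value; and expanding $\phi_\iota^2$ by introducing a second copy of $x_0$ exhibits $\EE[\phi_\iota^2]$ as an expectation of the same form over the symmetric pair-set $\{(e,0),(e,1):e\in S_\iota\}$. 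Thus it is enough to prove that for every nonempty $S'\subseteq H\setminus\{e_0\}$, say $S'=\{e_j:j\in K\}$ with $\emptyset\neq K\subseteq\{1,\dots,r\}$, and every fixed $j_0\in K$,
\[
Q:=\EE\Bigl[\prod_{j\in K}\prod_{\iota\in\{0,1\}}(\nu_{e_j}(x_0^{(\iota)},x_{e_j\setminus\{0\}})-1)\ \Big\vert\ x_0^{(0)},x_0^{(1)}\in V_0,\ x_{J\setminus\{0\}}\in V_{J\setminus\{0\}}\Bigr]\leq\norm{\nu_{e_{j_0}}-1}_{U^{e_{j_0}}}^{2}\,\norm{\nu}_\infty^{2(\abs K-1)}.
\]
Since $\abs K\leq r$, the hypothesis $\norm{\nu_{e_{j_0}}-1}_{U^{e_{j_0}}}=o(\norm{\nu}_\infty^{-r+1})$ makes the right side $o(1)$, and then $\EE[\phi_\iota^2]^{1/2}=o(1)$ (it is $1$ if $S_\iota=\emptyset$), so the product of square roots is $o(1)$.

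To prove the bound on $Q$, I would fix $j_0\in K$, enumerate $e_{j_0}\setminus\{0\}=\{1,\dots,r\}\setminus\{j_0\}$ as $\{k_1,\dots,k_{r-1}\}$, and double the coordinates $x_{k_1},\dots,x_{k_{r-1}}$ one at a time (the coordinate $x_0$ being doubled already), writing $Q_i$ for the analogous expectation after $i$ doublings, so $Q_0=Q$. At the step doubling $x_{k_i}$, split the current product into the factors containing $x_{k_i}$ and those not containing it; among the factors descended from the $\nu_{e_j}$, the only ones free of $x_{k_i}$ are those descended from $\nu_{e_{k_i}}$, present exactly when $k_i\in K$. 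One application of the Cauchy--Schwarz inequality in $x_{k_i}$, followed by expanding the square, gives $Q_{i-1}^2\leq Q_i\cdot\EE[(\text{product of the }\nu_{e_{k_i}}\text{-factors})^2]$, and as there are at most $2^i$ such factors, each of absolute value at most $\norm{\nu}_\infty$, the last expectation is at most $\norm{\nu}_\infty^{2^{i+1}}$ when $k_i\in K$ and equals $1$ otherwise. After all $r-1$ doublings the factors descended from $\nu_{e_j}$ with $j\in K\setminus\{j_0\}$ have each been discarded (at the step doubling $x_j$), what survives is $\prod_{\omega\in\{0,1\}^{e_{j_0}}}(\nu_{e_{j_0}}(x_{e_{j_0}}^{(\omega)})-1)$, and it no longer involves $x_{j_0}$, so $Q_{r-1}=\norm{\nu_{e_{j_0}}-1}_{U^{e_{j_0}}}^{2^r}$. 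Unwinding the recursion, only the $\abs K-1$ steps with $k_i\in K$ cost anything, and each contributes a factor $\norm{\nu}_\infty^{2}$ to the final bound, giving $Q_0\leq Q_{r-1}^{1/2^{r-1}}\norm{\nu}_\infty^{2(\abs K-1)}=\norm{\nu_{e_{j_0}}-1}_{U^{e_{j_0}}}^{2}\norm{\nu}_\infty^{2(\abs K-1)}$.

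The hard part is this last step — the bookkeeping. One must check that the factors descended from $\nu_{e_{j_0}}$ fill out the full cube $\{0,1\}^{e_{j_0}}$, so that the end expectation is a genuine power of $\norm{\nu_{e_{j_0}}-1}_{U^{e_{j_0}}}$ rather than some partial quantity; that each auxiliary $\nu_{e_j}$-factor is thrown away exactly once; and, crucially, that the accumulated loss in powers of $\norm{\nu}_\infty$ is only $2(\abs K-1)\leq 2(r-1)$, which is exactly what the hypothesis can absorb. The symmetrization in the second step is what makes this possible: it guarantees that both copies $x_0^{(0)}$ and $x_0^{(1)}$ survive among the target factors, so that the full $U^{e_{j_0}}$-norm emerges at the end and the hypothesis applies.
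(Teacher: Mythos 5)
Your proof is correct, but it takes a genuinely different route from the paper's. The paper argues by induction on $\sum_{e,\iota} n_{e,\iota}$, peeling off one factor at a time, so that the increment to control has a \emph{single} factor $\nu_{e_j}-1$ against plain copies of the remaining $\nu_e$'s; the $r-1$ Cauchy--Schwarz doublings then discard secondary factors that are products of $\nu$'s, whose mean is $1+o(1)$ by Lemma~\ref{lem:nu-Gowers}, costing $(1+o(1))\norm{\nu}_\infty$ per step, and a final Cauchy--Schwarz extracts $\norm{\nu_{e_j}-1}_{U^{e_j}}$ to the first power, for a total of $(1+o(1))\norm{\nu}_\infty^{r-1}\norm{\nu_{e_j}-1}_{U^{e_j}}$. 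You instead expand every $\nu_e$ as $1+(\nu_e-1)$ at the outset, so each non-constant term is a pure product of $(\nu_e-1)$'s, and an extra symmetrizing Cauchy--Schwarz in $x_0$ reduces to the symmetric quantity $Q$; your doubling chain then handles the discarded factors by the crude sup bound $\norm{\nu}_\infty^{2^{i+1}}$, with no appeal to Lemma~\ref{lem:nu-Gowers} anywhere. The price is a loss of $\norm{\nu}_\infty^{2}$ per discarded edge rather than $\norm{\nu}_\infty$, but the symmetrization delivers the Gowers norm squared, so your bound $\norm{\nu_{e_{j_0}}-1}_{U^{e_{j_0}}}^{2}\norm{\nu}_\infty^{2(\abs{K}-1)}$ is precisely the square of the paper's and the hypothesis $\norm{\nu_e-1}_{U^e}=o(\norm{\nu}_\infty^{-r+1})$ absorbs it equally well. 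Your bookkeeping checks out: each auxiliary family $\nu_{e_j}$, $j\in K\setminus\{j_0\}$, is discarded exactly once (at the step doubling $x_j$, when it has $2^i$ copies), the $\nu_{e_{j_0}}$ factors survive every step and fill out the full cube $\{0,1\}^{e_{j_0}}$, and the intermediate $Q_i$ are nonnegative as expectations of squares, so the recursion unwinds. What your route buys is self-containedness (no induction, no Lemma~\ref{lem:nu-Gowers}) at the cost of a $4^r$-term expansion; what the paper's buys is a first-power dependence on the Gowers norm in the same style as Lemma~\ref{lem:Gowers-strong-linear-forms}.
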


\begin{proof}
  [Proof (sketch)]
  It suffices to show, by induction on $\sum_{e,\iota} n_{e,\iota}$,
  that for any $j \in J \setminus\{0\}$,
  \begin{equation} \label{eq:Gowers-lf-2-term}
  \EE\Bigl[(\nu_{e_j}(x_0^{(0)},x_{e_j\setminus\{0\}}) - 1)
  \hspace{-1.5em}\prod_{\substack{e \in H \setminus\{e_0\}, \ \iota \in \{0,1\} \\ (e,\iota) \neq (e_j,0)}}\hspace{-2em}
  \nu_e(x_0^{(\iota)},x_{e \setminus\{0\}})^{n_{e,\iota}}
  \Big\vert
  x_0^{(0)}, x_0^{(1)} \in V_0,\
  x_{e_0} \in V_{e_0}
  \Bigr]
  = o(1).
\end{equation}
We apply the Cauchy-Schwarz inequality to bound~\eqref{eq:Gowers-lf-2-term}, as in the proof of
  Lemma~\ref{lem:Gowers-strong-linear-forms}, doubling (one at a time)
  each vertex in $e_j\setminus\{0\}$. At each application of the
  Cauchy-Schwarz inequality (similar to \eqref{eq:Gowers-slf-post-cs}),
  we obtain a main factor along with a secondary factor that can be upper bounded
  in a way that contributes a factor of $(1+o(1))\norm{\nu}_{\infty}$
  to the bound of~\eqref{eq:Gowers-lf-2-term}. After $r-1$
  applications of the Cauchy-Schwarz inequality, we bound the
  magnitude of \eqref{eq:Gowers-lf-2-term} by
  \[
  (1 + o(1)) \norm{\nu}_{\infty}^{r-1}
    \EE\Bigl[\prod_{\omega \in \{0,1\}^{e_j\setminus\{0\}}}
    (\nu_{e_j}(x_0^{(0)},x_{e_j\setminus\{0\}}^{(\omega)}) - 1) \
    \prod_{\omega \in \{0,1\}^{e_j\setminus\{0\}}}
    \nu_{e_j}(x_0^{(1)},x_{e_j\setminus\{0\}}^{(\omega)})^{n_{e,\iota}}
  \Big\vert
  x_{e_j}^{(0)}, x_{e_j}^{(1)} \in V_{e_j}
  \Bigr]^{1/2^{r-1}}
  \]
  Applying the Cauchy-Schwarz inequality one more time, we can bound
  the second factor by
  \[
    \EE\Bigl[\prod_{\omega \in \{0,1\}^{e_j}}
    (\nu_{e_j}(x_{e_j}^{(\omega)}) - 1)
  \Big\vert
  x_{e_j}^{(0)}, x_{e_j}^{(1)} \in V_{e_j}
  \Bigr]^{1/2^r}
    \EE\Bigl[    \prod_{\omega \in \{0,1\}^{e_j}}
    \nu_{e_j}(x_{e_j}^{(\omega)})^{n_{e,\iota}}
  \Big\vert
  x_{e_j}^{(0)}, x_{e_j}^{(1)} \in V_{e_j}
  \Bigr]^{1/2^r},
  \]
  where the first factor is $\norm{\nu_{e_j} - 1}_{U^{e_j}}$ and the
  second factor is $1 + o(1)$ by Lemma~\ref{lem:nu-Gowers}. It follows
  that the magnitude of \eqref{eq:Gowers-lf-2-term} is bounded by
  $(1+o(1))\norm{\nu}_\infty^{r-1}\norm{\nu_{e_j} - 1}_{U^{e_j}} = o(1)$.
\end{proof}


\begin{thebibliography}{9}

\bibitem{CFZrelsz}
  D.~Conlon, J.~Fox, and Y.~Zhao.
  A relative Szemer\'edi theorem.
  Preprint.
\end{thebibliography}
\end{document}